\newtheorem{theorem}{Theorem}
\newtheorem{conjecture}[theorem]{Conjecture}
\newtheorem{lemma}[theorem]{Lemma}
\def\myincludegraphics#1{\begin{center}\includegraphics{#1}\end{center}}
\title{Bipartizing fullerenes\thanks{Supported by a CZ-SL bilateral project MEB 091037 and BI-CZ/10-11-004.}}
\author{Zden\v{e}k Dvo\v{r}\'ak\thanks{Department of Applied Mathematics, Faculty of Mathematics and 
           Physics, Charles University, Malostransk\'e n\'am\v{e}st\'{\i}~25, 118~00 Prague, 
           Czech Republic. E-mail: {\tt rakdver@kam.mff.cuni.cz}. 
           Partially supported by Institute for Theoretical Computer Science (ITI),
           project of Ministry of~Education of~Czech Republic, and ERC starting grant CCOSA (grant agreement no. 259385).}\and
        Bernard Lidick\'y\thanks{Department of Applied Mathematics, Faculty of Mathematics 
           and Physics, Charles University, Malostransk\'e n\'am\v{e}st\'{\i}~25, 
           118~00 Prague, Czech Republic. E-mail: {\tt bernard@kam.mff.cuni.cz}.}\and
        Riste \v{S}krekovski\thanks{Department of Mathematics, University of Ljubljana, 
           Jadranska~19, 1111 Ljubljana, Slovenia.  E-mail: {\tt skrekovski@gmail.com}.
           Partially supported by ARRS, Research Program P1-0297. }}
\date{\today}
\begin{document}
\maketitle

\begin{abstract}
A fullerene graph is a cubic bridgeless planar graph with twelve 5-faces such that all
other faces are 6-faces. 
We show that any fullerene graph on $n$ vertices can be bipartized by removing $O(\sqrt{n})$ edges.
This bound is asymptotically optimal.
\end{abstract}

\textbf{Keywords:} Fullerene graph; Fullerene stability; Bipartite spanning subgraph

\section{Introduction}

{\em Fullerenes} are carbon-cage molecules comprised of carbon atoms
that are arranged on a sphere with pentagonal and hexagonal faces. 
The icosahedral $C_{60}$, well-known as Buckminsterfullerene
was found by Kroto et al.~\cite{KHBCS}, and later confirmed by experiments by
Kr\"{a}tchmer et al.~\cite{KLFH} and Taylor et al.~\cite{THAK}.
Since the discovery of the first fullerene molecule, the 
fullerenes have been objects of interest to scientists all over the world.

From the graph theoretical point of view, the fullerenes can be viewed as cubic 3-connected graphs embedded into
a sphere with face lengths being $5$ or $6$. Euler's formula implies that each fullerene contains
exactly twelve pentagons, but provides no restriction on the number of hexagons. In fact, it is not difficult to see
that mathematical models of fullerenes with precisely $\alpha$ hexagons exist for all values of $\alpha$ with the
sole exception of $\alpha=1$.  See~\cite{DDE, FM, GC, M} for more information on chemical, physical, and 
mathematical properties of fullerenes. 

The question of stability of fullerene molecules receives a lot of attention. The goal is to
obtain a graph-theoretical property whose value influences the stability. 
Different properties, like the number of perfect matchings~\cite{KKMS} or the independence number~\cite{FL} were
considered.  The property investigated in this paper is how far the graph is
from being a bipartite graph, which was suggested by Do\v{s}li\'c~\cite{D} and
further considered in \cite{DV}. Despite of the effort none of the 
so far considered  parameters works in all cases. Hence more research is still needed.

For a plane graph $H$, let $F(H)$ be the set of the faces of $H$.
Let $H$ be a fullerene graph, and let $K_H$ be the weighted complete graph whose vertices correspond
to the $5$-faces of $H$ and the weight of the edge joining two $5$-faces $f_1$ and $f_2$ is
equal to the distance from $f_1$ to $f_2$ in the dual of $H$.  Let $b(H)$ be the size of the minimum
set $S\subseteq E(H)$ such that $H-S$ is bipartite.  Do\v{s}li\'c and Vuki\v{c}evi\'c~\cite{DV} proved the following:

\begin{theorem}\label{thm-bibmatch}
If $H$ is a fullerene graph, then $b(H)$ is equal to the minimum weight of a perfect matching in $K_H$.
\end{theorem}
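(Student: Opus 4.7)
The plan is to reduce bipartization to the classical $T$-join problem in the dual graph $H^*$. Let $T \subseteq V(H^*)$ be the vertices corresponding to the $5$-faces of $H$; by Euler's formula $|T|=12$, and $T$ is precisely the set of odd-degree vertices of $H^*$. Identifying $E(H)$ and $E(H^*)$ canonically, let $b^*(H)$ denote the minimum size of a $T$-join in $H^*$, that is, the minimum edge set in which $T$ is exactly the set of odd-degree vertices.

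The heart of the proof is to show $b(H)=b^*(H)$. For $b(H)\geq b^*(H)$, take a minimum bipartizing set $S$, a $2$-coloring of $V(H)$ that is proper on $H-S$, and let $M\subseteq S$ be the set of monochromatic edges. Walking around any face $f$ of $H$ the color must switch an even number of times, so $|\partial f|-|\partial f\cap M|$ is even; equivalently, $M$ has odd degree at $f$ in $H^*$ iff $f$ is a $5$-face, so $M$ is a $T$-join of size at most $|S|$. Conversely, given any $T$-join $M$, construct a $2$-coloring of $V(H)$ by fixing the color of one vertex and toggling the color along each edge not in $M$. The $T$-join condition forces the number of non-$M$ edges around each face boundary to be even, so the toggles cancel, and since face boundaries generate the cycle space of the $2$-connected plane graph $H$, the coloring is well-defined. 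By construction it is proper on $H-M$, giving $b(H)\leq |M|$.

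To finish, I invoke the classical theorem of Edmonds and Johnson: for any graph with non-negative edge weights, the minimum weight of a $T$-join equals the minimum weight of a perfect matching in the complete graph on $T$ whose edge weights are the shortest-path distances. Applied to $H^*$ with unit weights, the shortest-path distances become exactly the dual distances used to define $K_H$, so $b^*(H)$ equals the minimum weight of a perfect matching in $K_H$. The main obstacle is the equivalence $b(H)=b^*(H)$: carefully translating the combinatorial condition ``$M$ is the monochromatic edge set of some $2$-coloring'' into the dual condition ``$M$ is a $T$-join in $H^*$,'' and using the cycle space of $H$ to verify both directions. The matching reduction in the final step is standard combinatorial optimization.
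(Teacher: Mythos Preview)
Your argument is correct. Note, however, that the present paper does not give its own proof of this theorem: it is quoted as a result of Do\v{s}li\'c and Vuki\v{c}evi\'c~\cite{DV} and used as a black box. There is therefore nothing in this paper to compare your proof against. Your route---identifying bipartizing sets with $T$-joins in the planar dual (where $T$ is the set of odd faces), and then invoking the Edmonds--Johnson theorem to equate the minimum $T$-join with a minimum-weight perfect matching on $T$ under shortest-path distances---is the standard one, and both directions of the equivalence $b(H)=b^*(H)$ are handled cleanly via the cycle/cut duality of plane graphs.
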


A corollary of the above theorem is a polynomial-time algorithm for finding a set of edges $S$ 
whose removal makes the graph bipartite. 

Do\v{s}li\'c and Vuki\v{c}evi\'c~\cite{DV} conjectured that $b(H)=O(\sqrt{|V(H)|})$.  In fact, they
gave the following stronger conjecture.
\begin{conjecture}
If $H$ is a fullerene graph with $n$ vertices, then $b(H)\le \sqrt{12n/5}$.
\end{conjecture}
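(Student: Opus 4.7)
By Theorem~\ref{thm-bibmatch}, it suffices to produce a perfect matching of the twelve pentagons of $H$ in the weighted graph $K_H$ whose total weight is at most $\sqrt{12n/5}$. The dual $D$ of $H$ is a planar triangulation of the sphere on $n/2+2$ vertices, with exactly twelve vertices of degree $5$ (the pentagons) and every other vertex of degree $6$. My plan rests on a ball-growth estimate in $D$ combined with a packing argument.

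For a vertex $u$ of $D$ and integer $r\ge 0$, let $B_r(u)$ denote the set of vertices of $D$ at distance at most $r$ from $u$. I would first prove, by induction on $r$ using Euler's formula on the subgraph induced on $B_r(u)$ (equivalently, a discrete Gauss--Bonnet calculation), a lower bound of the shape $|B_r(u)|\ge 3r^2+O(r)$ whenever $r$ is not too large: this matches the hexagonal-lattice count $3r^2+3r+1$ up to a bounded correction per pentagon reflecting its local angle defect. Then run BFS from all twelve pentagons simultaneously; as long as the balls $B_r(f_1),\ldots,B_r(f_{12})$ are pairwise disjoint one has $\sum_i|B_r(f_i)|\le n/2+2$, so two balls must meet once $r$ reaches order $\sqrt{n/72}$. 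Iterating the argument---match the closest pair, delete them, recurse on the remaining $10,8,\ldots,2$ pentagons---produces a matching of total weight at most a sum of the form
\begin{equation*}
\sqrt{\frac{2n}{3}}\cdot\sum_{k\in\{12,10,8,6,4,2\}}\frac{1}{\sqrt{k}}\cdot(1+o(1)) = O(\sqrt{n}).
\end{equation*}

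This establishes the asymptotic $b(H)=O(\sqrt n)$ promised by the abstract, but the constant coming out of the above sum is roughly $2.1$, while the conjecture demands $\sqrt{12/5}\approx 1.549$. The main obstacle is therefore not the asymptotic rate but the exact constant: iterated nearest-neighbour matching is globally suboptimal, since a small cluster of pentagons can force the remaining pentagons, once paired, to stretch further than necessary. To reach the sharp constant I would replace the greedy step by a global argument, for instance working with the Voronoi partition of $D$ into twelve regions around the pentagons and relating the total matching weight to the total boundary length of these regions via a discrete isoperimetric inequality. The bound is tight on buckminsterfullerene $C_{60}$, where all six nearest pentagon pairs sit at dual distance $2$ and $b(C_{60})=12=\sqrt{12\cdot 60/5}$, so any proof must be delicate enough not to overshoot on such icosahedrally symmetric configurations.
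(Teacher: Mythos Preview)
The statement you are addressing is a \emph{conjecture} in the paper, not a theorem: the authors do not prove the sharp bound $b(H)\le\sqrt{12n/5}$, and say explicitly that their own constant $\sqrt{3087/2}\approx 39.29$ falls far short of the conjectured $\sqrt{12/5}\approx 1.549$. There is thus no proof in the paper against which to compare your attempt.

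Your proposal does not prove the conjecture either, and you acknowledge this. You compute that greedy nearest-pair matching yields a constant of roughly $2.1$, above the target, and then say only that you ``would replace the greedy step by a global argument'' involving Voronoi regions and a discrete isoperimetric inequality. That final paragraph is a wish, not an argument: no such inequality is stated, let alone proved, and the tightness at $C_{60}$ that you correctly note means any inequality you invoke must become an equality there, a constraint for which you give no mechanism. As written, the proposal at best re-derives the weaker Theorem~\ref{thm-main}, $b(H)=O(\sqrt n)$, by a route different from the paper's layer-peeling of patches---and even that derivation is only sketched. In particular the ball-growth bound $|B_r(u)|\ge 3r^2+O(r)$ is asserted rather than proved; positive curvature at the central pentagon already drops the leading coefficient from $3$ to $5/2$, so the ``bounded correction per pentagon'' is of order $r^2$, not $O(r)$, and the constants in your displayed sum would need to be reworked accordingly.
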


The main result of this paper is an upper bound on $b(H)$, confirming the weaker version of the conjecture.
\begin{theorem}\label{thm-main}
If $H$ is a fullerene graph with $n$ vertices, then $b(H)=O(\sqrt{n})$.
\end{theorem}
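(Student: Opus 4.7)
By Theorem~\ref{thm-bibmatch} it suffices to construct a perfect matching in $K_H$ of weight $O(\sqrt n)$. Since $K_H$ has only $12$ vertices, such a matching has exactly six edges, and a natural greedy strategy works: repeatedly pick the pair of remaining pentagons that are closest in the dual of $H$, add it to the matching, delete those two pentagons, and iterate. Because only six rounds occur, the whole estimate reduces to the following uniform claim: for any set $P$ of pentagons with $|P|\ge 2$, some two elements of $P$ are at dual distance $O(\sqrt n)$.

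To prove this claim I would grow BFS balls simultaneously in the dual $H^*$ around each $p\in P$. Let $d$ be the minimum pairwise dual distance within $P$ and put $s=\lfloor (d-1)/2\rfloor$, so that the balls $B(p,s)$ for $p\in P$ are pairwise disjoint in $H^*$. Since $|V(H^*)|=n/2+2$, we have
\[
\sum_{p\in P}\bigl|B(p,s)\bigr|\;\le\;n/2+2.
\]
If one establishes a lower bound of the form $|B(p,s)|\ge c\,s^2$ with a uniform constant $c>0$ (valid whenever $B(p,s)\neq V(H^*)$), then $|P|\cdot c\,s^2\le n/2+2$ forces $s=O(\sqrt n)$, and hence $d=O(\sqrt n)$ as required.

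The main obstacle is the quadratic growth lower bound. It fails for generic planar triangulations: a long thin cylindrical patch has only linear BFS growth. Fullerenes avoid this pathology because $H^*$ is combinatorially ``nonnegatively curved'': every vertex has degree $5$ or $6$, so every interior vertex of a disk $D\subseteq H^*$ contributes nonnegatively in combinatorial Gauss--Bonnet, while the total interior defect is always at most $12$. From this one derives a Euclidean-type combinatorial isoperimetric inequality $|\partial B(p,s)|\ge c_1\sqrt{|B(p,s)|}$, and solving the resulting recursion $|B(p,s+1)|-|B(p,s)|\ge c_2\sqrt{|B(p,s)|}$ yields $|B(p,s)|=\Omega(s^2)$. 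Making this rigorous --- in particular when $B(p,s)$ contains several already-matched pentagons, so that the interior defect is larger than $1$ --- is the technical heart of the argument. Once the lower bound is in hand, the greedy matching has total weight $6\cdot O(\sqrt n)=O(\sqrt n)$, which proves the theorem.
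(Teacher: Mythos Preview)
Your reduction to the ``uniform claim'' is where the argument breaks. The assertion that for every set $P$ of pentagons with $|P|\ge 2$ some two elements of $P$ lie at dual distance $O(\sqrt n)$ is false. In a capped nanotube fullerene with six pentagons in each cap, choose $P=\{p,q\}$ with $p$ and $q$ in opposite caps; then $d(p,q)$ is proportional to the tube length, which is $\Theta(n)$ when the circumference is bounded. The same example kills the isoperimetric inequality $|\partial B(p,s)|\ge c_1\sqrt{|B(p,s)|}$ you invoke: once $B(p,s)$ has swallowed all six pentagons of one cap and entered the tube, its boundary length stays constant while its area grows linearly in $s$. Nonnegative combinatorial curvature gives, via Gauss--Bonnet, only an \emph{upper} bound on the boundary turning; it does not yield a Euclidean-type lower bound on $|\partial B|$, and the ``total defect $\le 12$'' observation does not exclude a locally cylindrical region in which the defect inside the ball is exactly $2\pi$ (six pentagons).

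The paper's proof confronts precisely this obstruction. Its layer peeling is your BFS, and its quadratic estimate (Lemma~\ref{lemma-peeling}) is your $|B(p,s)|=\Omega(s^2)$, but that lemma is proved only under the hypothesis that the patch being peeled contains a number of pentagons \emph{different from six}; with exactly six the boundary length can be constant and the bound fails. Rather than trying to reach an arbitrary second pentagon, the paper builds a tree of sub-patches that discards any branch containing exactly six pentagons, and shows that from a fixed pentagon $f$ one can still reach at least \emph{five} other pentagons at dual distance $O(\sqrt n)$ through the surviving branches (Lemma~\ref{lemma-dist}). This gives minimum degree $\ge 5$ in the short-edge subgraph $K'_H$, and a short Tutte--Berge argument on twelve vertices (Lemma~\ref{lemma-match}), together with the triangle inequality for the exceptional $K_{5,7}$ case, then produces the required matching. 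The greedy scheme is not needed, and the route to it via the uniform claim, as written, does not go through.
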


\section{Proof of Theorem~\ref{thm-main}}

Let $H$ be a fullerene graph.  A {\em patch with boundary $o$} is a $2$-connected
subgraph $G\subseteq H$ such that $o\in F(G)$ (usually, we consider $o$ to be the outer face of $G$) and $F(G)\setminus F(H)\subseteq \{o\}$
(but it is possible for the boundary $o$ to be also a face of $G$).  Let $v$ be a vertex incident with $o$.
If $\deg_G(v)=3$, then $v$ is a {\em $3$-vertex (with respect to $o$)},
otherwise $v$ is a {\em $2$-vertex (with respect to $o$)}.  
An edge $e$ incident with
$o$ is a {\em $22$-edge} (resp. {\em a $33$-edge}) if both vertices incident with $e$ are $2$-vertices (resp. $3$-vertices)
with respect to $o$.  If $e$ is neither a $22$-edge nor a $33$-edge, then it is a {\em $23$-edge}.
The {\em description} $D(o)$ of the boundary $o$ is the cyclic sequence
in that $A$ represents a $33$-edge, $B$ represents a $22$-edge, and a maximal consecutive segment of $23$-edges is represented
by the integer giving its length.  For example, the boundary of the patch consisting of a $5$-face and a $6$-face sharing an edge
is described as $BB2BBB2$.

Let $s(o)$ and $t(o)$ be the numbers of $22$-edges and $33$-edges of $o$, respectively, and let $s_2(o)$ be the number of pairs of consecutive $22$-edges of $o$.
Let $p(G)$ be the number of $5$-faces of $G$ distinct from $o$.
The following lemma relates the number of $22$- and $33$-edges; a similar relation was derived by
Kardo\v{s} and \v{S}krekovski~\cite{KS}.

\begin{lemma}\label{lemma-n22}
If $G$ is a patch with the boundary $o$, then $s(o)=6-p(G)+t(o)$.
\end{lemma}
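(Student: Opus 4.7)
The plan is to derive the identity from Euler's formula combined with an elementary boundary count. Since $G$ is $2$-connected and plane, the outer face $o$ is bounded by a cycle; write $n_2$ and $n_3$ for the numbers of $2$-vertices and $3$-vertices on $o$, $u$ for the number of $23$-edges on $o$, and $\ell$ for the length of $o$, so that $\ell = n_2+n_3 = s(o)+t(o)+u$.

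First I would obtain a boundary identity by double counting. Every vertex on $o$ is incident with exactly two boundary edges, so counting pairs (boundary $2$-vertex, incident boundary edge) once per vertex and once per edge gives $2n_2 = 2s(o)+u$, and analogously $2n_3 = 2t(o)+u$. Subtracting yields
\[
n_2 - n_3 \;=\; s(o) - t(o).
\]

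Next I would obtain an interior identity from Euler's formula. Every vertex of $G$ not on $o$ has degree $3$: since $F(G)\setminus F(H)\subseteq\{o\}$, the full $H$-neighbourhood of an interior vertex is contained in $G$, so its degree agrees with its degree in $H$. Every face of $G$ other than $o$ has length $5$ or $6$. Writing $h$ for the number of hexagonal inner faces and $n_i$ for the number of interior vertices, the identities $|V(G)|-|E(G)|+|F(G)|=2$, the handshaking relation $2|E(G)|=2n_2+3n_3+3n_i$, and the face-length sum $2|E(G)|=\ell+5p(G)+6h$ give three linear equations in the unknowns $|E(G)|$, $n_i$, $h$. Solving (routinely) should eliminate all three and collapse to
\[
n_2 - n_3 \;=\; 6 - p(G).
\]

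Comparing the two identities proves the lemma. The only nonroutine point is the cancellation of $h$ in the Euler step, but this is exactly the combinatorial-curvature principle behind the statement: each face $f$ contributes curvature $6-\ell_f$, so hexagons are invisible, each pentagon contributes $+1$, and the boundary face contributes $6-\ell$, producing the constant $6$ on the right-hand side. I expect this to be the only step requiring real attention, and it is just bookkeeping.
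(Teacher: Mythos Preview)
Your proposal is correct and follows essentially the same approach as the paper: both combine Euler's formula, the handshaking relation, the face-length sum, and the boundary identity relating $2$-vertices to $22$- and $33$-edges. Your organisation into the two separate identities $n_2-n_3=s(o)-t(o)$ and $n_2-n_3=6-p(G)$ is slightly tidier than the paper's direct substitution chain, but the content is identical.
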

\begin{proof}
Suppose that the length of $o$ is $\ell$.  Let $n=|V(G)|$, $m=|E(G)|$ and let $f$ be the number of faces of $G$.
Since each edge of $G$ is incident with two
faces, $$2m = 6 (f - p(G) - 1) + 5p(G) + \ell,$$  i.e., $\ell=2m+p(G)+6-6f$.  
Note that the number of $2$-vertices is $(\ell+s(o)-t(o))/2$, 
which can be easily seen from the modification of the boundary by adding $s(o)$ and 
deleting $t(o)$ 3-vertices so that there is no 33-edge or 22-edge.
Thus $$2m=3n - (\ell+s(o)-t(o))/2.$$
Substituting for $\ell$, we obtain $$3m=3n+3f - 6 + \frac{6 - p(G) + t(o) - s(o)}{2}.$$
By Euler's formula, $m=n+f-2$, thus $6 - p(G) + t(o) - s(o)=0$ and the claim of the lemma follows.
\end{proof}

A patch $G$ with the boundary $o$ is a {\em fat worm} if $p(G)=0$, the subgraph of $G$ induced by $V(G)\setminus V(o)$
is a path $P$, and the edges of $E(G)\setminus E(P)$ incident with each two consecutive inner vertices of $P$ are not incident to a common face of $G$.  See Figure~\ref{fig:worms}(a).
Note that in this case, the description of $o$ is 
\begin{itemize}
\item $BB2B(2k+2)BB2B(2k+2)$ if $P$ has length $2k+1$ and 
\item $BB2B(2k+2)B2BB(2k+4)$ if $P$ has length $2k+2$.  
\end{itemize}
We consider the patch with exactly one
vertex not incident with $o$ (and boundary $BB2BB2BB2$) to be a fat worm as well (in this case, $P$ has length $0$).
The patch $G$ is a {\em slim worm} if $p(G)=0$, $V(G)=V(o)$ and $t(o)=0$. Geometrically, it is a straight
line of hexagons, see Figure~\ref{fig:worms}(b). 
Note that $D(o)=BBB(2k)BBB(2k)$ for some $k$ (or $D(o)=BBBBBB$, when
$k=0$ and $o$ is a $6$-face).  The patch $G$ is a {\em worm}
if it is a fat worm or a slim worm.  The {\em shell} is the patch $G$ with boundary $o$ such that $p(G)=0$ and $D(o)=BB4BB4BB4$ (having $4$ internal vertices).
See Figure~\ref{fig:worms}(c).

\begin{figure}
\myincludegraphics{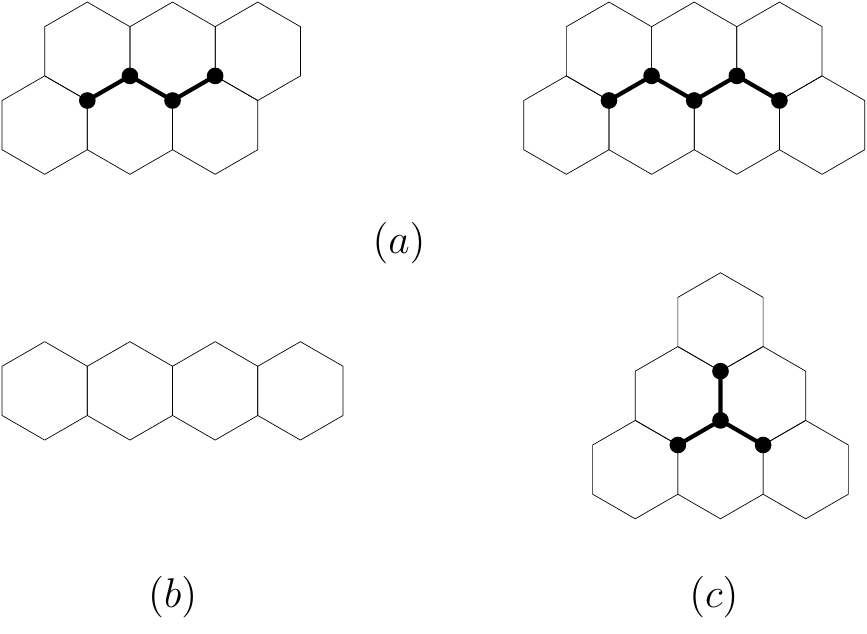}
\caption{A fat worm, a slim worm and the shell. \label{fig:worms}}
\end{figure}

An {\em $\ell$-chord} of a cycle $C$ in a patch $G$ is a path of length $\ell$ with distinct endvertices belonging to $V(C)$ such that
the inner vertices and edges of the path do not belong to $C$.  We say a chord instead of a $1$-chord.
Consider an $\ell$-chord $Q$ of the boundary $o$ of a patch $G$.  Let $G_1$ and $G_2$ be the two patches into that $Q$ splits $G$ (i.e., the
subgraphs such that $G_1\cup G_2=G$, $G_1\cap G_2=Q$ and $G_1\neq Q\neq G_2$), and $o_1$ and $o_2$ their boundaries.
We say that $Q$ {\em splits off a face} if $G_1=o_1$ or $G_2=o_2$.
The patch $G$ is {\em decomposable} if it contains a {\em simplifying cut}, that is

\begin{itemize}
\item an $\ell$-chord $Q$ of $o$ with $\ell\le 3$ such that $t(o_1)+t(o_2)<t(o)$, or
\item two $4$-chords $Q_1=v_0v_1v_2v_3v_4$ and $Q_2=w_0w_1w_2w_3w_4$ such that
$v_0w_0$, $v_2w_2$ and $v_4w_4$ are edges of $G$. See Figure~\ref{fig:24chords}.
\end{itemize}
Otherwise, we call $G$ {\em indecomposable}.  We say that $G$ is a {\em normal patch} if
$G$ is indecomposable, no $5$-face of $G$ distinct from $o$ shares an edge with $o$
and $G$ is neither a worm nor a shell.

\begin{figure}
\myincludegraphics{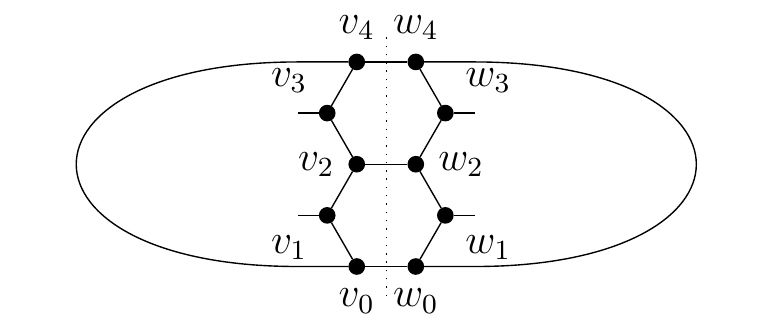}
\caption{Two 4-chords. \label{fig:24chords}}
\end{figure}

\begin{lemma}\label{lemma-nochord}
Let $G$ be a normal patch with boundary $o$ and $Q$ an $\ell$-chord of $o$, with $\ell\le 3$.
Then $\ell\ge 2$ and $Q$ splits off a face.  Furthermore, the number of $33$-edges incident with the
endvertices of $Q$ is most $\ell-2$.
\end{lemma}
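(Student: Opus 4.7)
The plan is to combine a bookkeeping identity for how $33$-edges change when $G$ is split along $Q$ with the indecomposability hypothesis, and, for the delicate case $\ell=1$, to exploit normality ruling out worm structures.

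Let $u,v$ denote the endpoints of $Q$; each is a $3$-vertex of $o$ in $G$ since its third neighbor lies along $Q$, while in each sub-patch $G_i$ with boundary $o_i=P_i\cup Q$ obtained by cutting along $Q$, both $u$ and $v$ are $2$-vertices. Writing $\alpha$ for the number of $33$-edges of $o$ incident with $\{u,v\}$ and $\beta$ for the number of $33$-edges of $Q$ appearing in $o_1\cup o_2$, a direct edge-by-edge accounting gives $t(o_1)+t(o_2)=t(o)-\alpha+\beta$. Any edge of $Q$ incident with $u$ or $v$ has a $2$-vertex endpoint in each $o_i$ and hence cannot be a $33$-edge there; so only edges of $Q$ whose both endpoints are interior to $Q$ contribute, giving $\beta=0$ for $\ell\le 2$ and $\beta\le 1$ for $\ell=3$ (only $q_1q_2$ qualifies, and only in the sub-patch containing the external neighbors of both $q_1$ and $q_2$). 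Indecomposability of $G$ applied to $Q$ forces $t(o_1)+t(o_2)\ge t(o)$, whence $\alpha\le\beta\le\max\{0,\ell-2\}$; this already establishes the $33$-edge bound whenever $\ell\ge 2$.

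To show that $Q$ splits off a face, I will consider the face $f$ of $G$ on the $G_1$-side that contains the edge $ua\in E(o)$; since $f$ shares this edge with $o$, normality forces it to be a $6$-face. Tracing its boundary from $u$ and using that $a$ is $2$-valent (from the just-established $\alpha$-bound for $\ell\ge 2$, or from $\alpha=0$ for $\ell=1$), the next vertex must be $a$'s other $o$-neighbor $a'$; symmetrically on the $v$-side the boundary continues to $c$'s other $o$-neighbor $c'$. For $\ell\in\{2,3\}$ a short case analysis based on which sub-patch contains the external neighbor of each interior vertex of $Q$ closes the $6$-cycle using only vertices of $P_1\cup Q$, so $G_1=f$ and $Q$ splits off $f$.

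The main obstacle is the case $\ell=1$, where indecomposability alone only yields $\alpha\le 0$, one short of the required $\alpha\le -1$. Assume for contradiction that $\ell=1$. The argument above then produces a $6$-cycle $u,a,a',c',c,v$ bounding $f$, but now the edge $a'c'$ may be a $1$-chord of $o$ rather than an edge of $o$. If $a'c'\in E(o)$ then $G_1=f$; otherwise the identical argument applied to $a'c'$ produces another $6$-face adjoining $f$ along $a'c'$, and the analysis pushes inward. An analogous dichotomy holds on the $G_2$-side. Iterating this descent on both sides of $Q$ (necessarily terminating since $G$ is finite) forces $G$ to be a linear strip of $6$-faces with no interior vertex, and a direct reading of $D(o)$ then yields precisely the description $BBB(2k)BBB(2k)$ of a slim worm. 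This contradicts the assumption that $G$ is a normal patch, so $\ell\ge 2$, completing the proof.
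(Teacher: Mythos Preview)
Your bookkeeping identity $t(o_1)+t(o_2)=t(o)-\alpha+\beta$ is correct and elegant, and it yields the $33$-edge bound $\alpha\le\ell-2$ more directly than the paper (which only derives it at the end, after having shown that $Q$ splits off a face).  Your treatment of $\ell=1$ via iterated peeling towards a slim worm is also essentially sound, and parallels the paper's minimality argument.

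The genuine gap is in your ``splits off a face'' argument for $\ell=3$.  Your phrase ``a short case analysis based on which sub-patch contains the external neighbor of each interior vertex of $Q$ closes the $6$-cycle'' hides a case that does \emph{not} close: when $q_1$ and $q_2$ have their external neighbors on opposite sides of $Q$ (so $\beta=0$), neither $G_1$ nor $G_2$ has all interior vertices of $Q$ as $2$-vertices, and the face incident with $uq_1$ on either side picks up an internal vertex of $G$ rather than staying on $P_i\cup Q$.  In this configuration nothing you have written forces a single face to be split off.  The paper handles exactly this case by tracing the faces on both sides of $Q$ and showing that the resulting structure must be a \emph{fat worm}, contradicting normality --- the precise analogue of your slim-worm argument for $\ell=1$, but using the other half of the ``$G$ is not a worm'' hypothesis.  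You never invoke the fat-worm exclusion, so this case is simply missing.

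A smaller issue: for $\ell=2$ (and for the unmixed $\ell=3$ subcases) your text asserts ``$G_1=f$'', but whether it is $G_1$ or $G_2$ that is a face depends on which side $q_1$'s (and $q_2$'s) external neighbor lies.  You need to choose the side on which all interior vertices of $Q$ are $2$-vertices, as the paper does, rather than fixing $G_1$ in advance.
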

\begin{proof}
Let $G_1$ and $G_2$ with boundaries $o_1$ and $o_2$, respectively, be the patches to that $Q$ splits $G$.
Let $Q=q_0q_1\ldots q_{\ell}$ and $o_2=q_0v_1v_2\ldots v_aq_{\ell}q_{\ell-1}\ldots q_0$.

Suppose first that $\ell=1$.  Since $G$ is not a slim worm, there exists an edge $e\in E(G)$ that either is
a $33$-edge of $o$ or is incident with a vertex in $V(G)\setminus V(o)$.  Let us choose the chord $Q$ and the patches $G_1$ and $G_2$
so that $e\in E(G_2)$ and $G_2$ is minimal.  As $G$ is indecomposable, each $33$-edge of $G$ is also a $33$-edge in $G_1$ or $G_2$.
It follows that $v_1$ and $v_a$ are $2$-vertices, and since the internal face incident with $q_0v_1$ has length six,
$v_2$ and $v_{a-1}$ must be adjacent.  Since $e\in E(G_2)$, we have $G_2\neq o_2$; hence, $v_2v_{a-1}$ is a chord of $o$.
The chord $v_2v_{a-1}$ splits $G$ to patches $G'_1$ and $G'_2$ with $G'_2\subset G_2$.  However, this contradicts the choice
of $Q$, since it is easy to see that $e\in E(G')$.  We conclude that $o$ is an induced cycle.

Suppose now that $\ell=2$.  By symmetry between $G_1$ and $G_2$, we may assume that $q_1$ is a $2$-vertex in $G_2$.
Since $t(o_1)+t(o_2)\ge t(o)$, we have that $v_1$ and $v_a$ are $2$-vertices, and it follows that $G_2=o_2$ is a face split off by $Q$.

\begin{figure}
\myincludegraphics{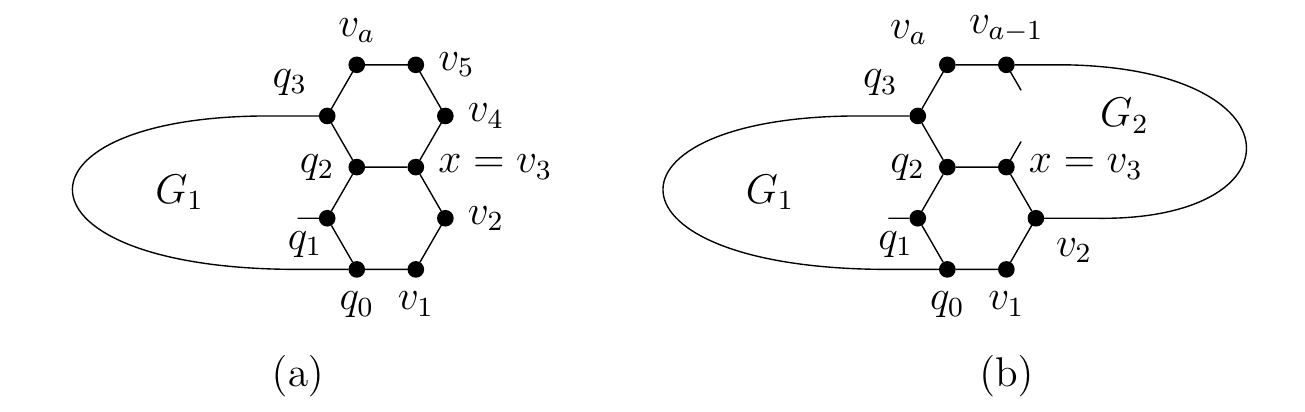}
\caption{3-chords from Lemma~\ref{lemma-nochord}. \label{fig:3chord}}
\end{figure}

Finally, suppose that $\ell=3$.  Suppose first that both $q_1$ and $q_2$ are $2$-vertices in $G_2$,
and thus $q_1q_2$ is a $33$-edge with respect to $o_1$.
Since $t(o_1)+t(o_2)\ge t(o)$, at least one of $v_1$ and $v_a$ (say $v_1$) is a $2$-vertex.
Thus, $V(Q)\cup \{v_1,v_2,v_a\}$ are all incident with a common face, which is only possible if
$v_2=v_a$ and $G_2$ consists of a single face.  It follows that $Q$ splits off a face.

The case that both $q_1$ and $q_2$ are $2$-vertices in $G_1$ is symmetrical.
Hence, without loss of generality, we assume that $q_1$ is a $2$-vertex and $q_2$ is a $3$-vertex in $G_2$.
As $t(o_1)+t(o_2)\ge t(o)$, we infer that both $v_1$ and $v_a$ are $2$-vertices.
Let $x\not\in\{q_1,q_3\}$ be the third neighbor of $q_2$.  Observe that
\begin{itemize}
\item if $x\in V(o)$, then both $xq_2q_3$ and $xq_2q_1q_0$ split off a face (for the former, note that the edge joining $v_{a-1}$ with
a neighbor of $x$ is not a chord, since we already proved that $o$ is an induced cycle). See Figure~\ref{fig:3chord}(a).
\item if $x\not\in V(o)$, then $x$ and $v_2$
are adjacent, $v_1v_2xq_2q_1q_0$ is a face and we may apply the same observations to the $3$-chord $v_2xq_2q_3$. See Figure~\ref{fig:3chord}(b).
\end{itemize}
By symmetry, this argument also holds for $o_1$. Hence by repeating the argument we conclude that $G$ is a fat worm, contradicting the assumption
that $G$ is a normal patch.

Furthermore, note that if $Q$ splits off a face, then $t(o)=t(o_1)+t(o_2)-(\ell-2)+k$, where $k$ is the number of $33$-edges incident with $q_0$ or $q_{\ell}$.
Since $G$ is indecomposable, it follows that $k\le\ell-2$.
\end{proof}

For a patch $G$ with boundary $o$, let $G'\subseteq G$ be the subgraph consisting of the outer layer of the faces of $G$;
that is, $e$ is an edge of $G'$ if and only if it is incident with a face that shares an edge with $o$.
Let $S\subseteq V(G)\setminus V(o)$ be the set of vertices that have at least two neighbors in $o$.  Let $o'=G'-(V(o)\cup S)$.
See Figure~\ref{fig:o}(a).

\begin{figure}
\myincludegraphics{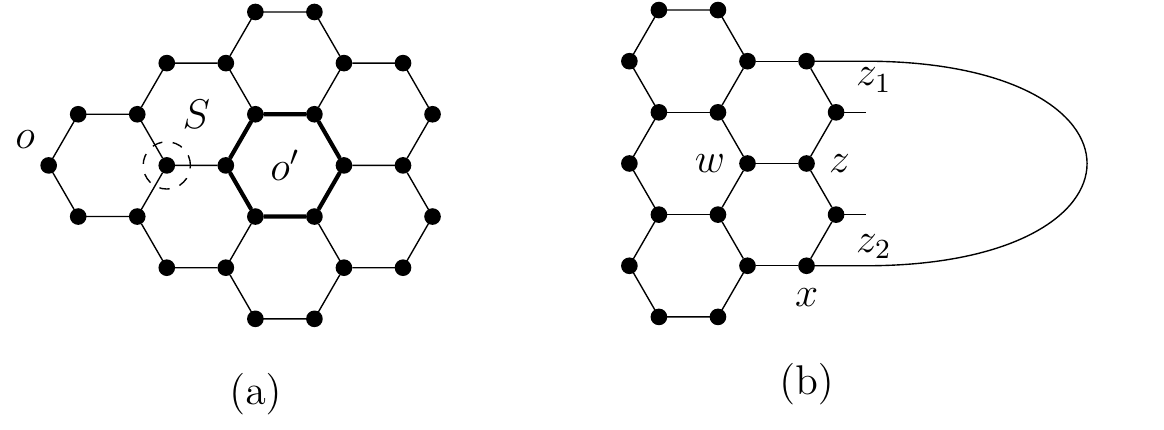}
\caption{Patch $G$ and $o'$ and a configuration from Lemma~\ref{lemma-layer}.\label{fig:o}}
\end{figure}

\begin{lemma}\label{lemma-layer}
If $G$ is a normal patch with boundary $o$, then $o'$ is a cycle, and the patch bounded by $o'$
satisfies $t(o')=t(o)$, $s(o')=s(o)$ and $s_2(o')\ge s_2(o)$.  Furthermore, 
$\ell(o')=\ell(o)+2p(G)-12-2s_2(o)$.
\end{lemma}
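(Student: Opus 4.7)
The plan is to examine the outer layer of hexagonal faces around $o$, determine the set $S$ precisely, and then apply Euler's formula to the resulting annulus. Using the normal patch hypothesis, every outer-layer face is a hexagon (since no $5$-face of $G$ distinct from $o$ shares an edge with $o$) and, by Lemma~\ref{lemma-nochord} (no $1$-chords), meets $o$ in a segment of length $m\in\{1,2,3,4\}$. Letting $n_m$ denote the number of outer-layer hexagons with segment length $m$ and $k=n_1+n_2+n_3+n_4$, one has $\ell(o)=n_1+2n_2+3n_3+4n_4$, $t(o)=n_1$, $s(o)=n_3+2n_4$, and $s_2(o)=n_4$.

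Next I would characterize $S$. A vertex $u\in S$ with exactly two $o$-neighbors defines a $2$-chord which, by Lemma~\ref{lemma-nochord} together with normality, splits off an $m=4$ hexagon whose sole inner vertex is $u$; three $o$-neighbors would force $G$ to be the one-internal-vertex fat worm (boundary $BB2BB2BB2$) or the shell, both of which are excluded. Lemma~\ref{lemma-nochord}'s ``at most $\ell-2$'' $33$-edge bound applied to this $2$-chord forbids $m_{i\pm1}=1$ next to $m_i=4$, and a direct degree count for $u$'s third neighbor (which must have degree at least $2$ inside the inner patch for $o'$ to be a cycle) forbids $m_{i\pm1}=3$ there. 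Hence each $m=4$ segment is flanked on both sides by $m=2$ segments and $|S|=n_4$.

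With this structural picture in hand, $G'$ is $2$-connected, $o'$ is a simple cycle, and Euler's formula applied to the annulus bounded by $o$ and $o'$ yields
\[
\ell(o)+\ell(o')+2|S|=4k
\]
via $2|E(G')|=6k+\ell(o)+\ell(o')$, $|V(G')|=\ell(o)+\ell(o')+|S|$, and $k+2$ faces. Substituting $|S|=n_4$ and the identity $p(G)=6+n_1-n_3-2n_4$ from Lemma~\ref{lemma-n22} gives $\ell(o')=\ell(o)+2p(G)-12-2s_2(o)$. Inspecting inner paths finishes the remaining counts: each $m=1$ hexagon contributes a single $33$-edge $u_2u_3$ to $o'$ (the middle edge of its four-vertex inner path, where normality prevents $u_2$ and $u_3$ from meeting $V(o)\cup S$), each $m=3$ hexagon contributes one $22$-edge, and each $m=4$ pinch contributes two consecutive $22$-edges on $o'$ at the shared vertex $y$ between its two $m=2$ flanks. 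Summing gives $t(o')=n_1=t(o)$, $s(o')=n_3+2n_4=s(o)$, and at least $n_4$ consecutive $22$-pairs, whence $s_2(o')\ge s_2(o)$.

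The main obstacle is the structural analysis that pins down the rigid ``$m=4$ flanked by $m=2$'' pattern and excludes three-$o$-neighbor vertices from $S$: this requires combining Lemma~\ref{lemma-nochord} (in several modes), the exclusion of worms and shells, and the cycle requirement for $o'$. Once this skeleton is in place, the Euler calculation and the inner-path bookkeeping are routine.
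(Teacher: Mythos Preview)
Your face-classification approach (indexing outer-layer hexagons by the length $m\in\{1,2,3,4\}$ of their intersection with $o$, then applying Euler's formula to the annulus) is a genuinely different and attractive route to the length formula.  Once the structural facts are in place, your identity $\ell(o)+\ell(o')+2|S|=4k$ together with $|S|=n_4=s_2(o)$ and Lemma~\ref{lemma-n22} does give $\ell(o')=3n_1+2n_2+n_3-2n_4=\ell(o)+2p(G)-12-2s_2(o)$, and the bookkeeping $t(o)=n_1$, $s(o)=n_3+2n_4$, $s_2(o)=n_4$ is correct.  The paper instead tracks the boundary \emph{description} $D(o)$ and writes down an explicit rewriting rule $D(o)\mapsto D(o')$; this is heavier to state but has the advantage that it is reused verbatim in the proof of Lemma~\ref{lemma-peeling} (the ``$n_1\ge 3$ next to $BB$'' observation there comes directly from the rewriting rule).

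There is, however, a genuine gap.  The hard part of the lemma is showing that $o'$ is a cycle at all, and this is exactly where your argument becomes circular.  You exclude $m_{i\pm1}=3$ next to $m_i=4$ by saying that the third neighbour $x$ of the apex ``must have degree at least~$2$ inside the inner patch for $o'$ to be a cycle''; but that $o'$ is a cycle is what you are trying to prove, and your Euler computation (with exactly $k+2$ faces, inner face bounded by a single curve of length $\ell(o')$) already presupposes it.  The correct way to kill $m_{i\pm1}=3$ is direct: with $v_6\in S$ the apex and $x$ its third neighbour, the path $v_1v_6xw_3$ is a $3$-chord whose ``far'' side contains both the $m=4$ and the $m=3$ hexagon, so it does \emph{not} split off a face, contradicting Lemma~\ref{lemma-nochord}.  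This is essentially the paper's step ``no vertex of $o'$ has a neighbor both in $S$ and in $o$''.  More generally, the paper spends most of its proof on exactly the issues you skip: $S$ is independent (else $G$ is a fat worm), no $o'$-vertex has two $S$-neighbours (else $G$ is the shell or has a simplifying pair of $4$-chords), and $o'$ has no bridge (again via the $4$-chord pair).  Your outline would be complete if you replaced the circular step by the $3$-chord argument above and supplied these bridge/independence checks before invoking Euler.
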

\begin{proof}
Since $G$ is not a fat worm, we have $|V(G)\setminus V(o)|>1$.
If two vertices of $S$ were adjacent, then $|V(G)\setminus V(o)|=2$ by Lemma~\ref{lemma-nochord} and $G$ would be a fat worm,
thus $S$ is an independent set and $o'$ is not empty.
Lemma~\ref{lemma-nochord} also implies that $G-V(o)$ is connected, and since $S$ only contains vertices whose degree in $G-V(o)$ is one,
$o'$ is connected as well.

Suppose that a vertex $w$ of $o'$ is adjacent to more than one vertex of $S$.
Since $G$ is not the shell, $w$ is adjacent to exactly two vertices in $S$; let $z$ be
the neighbor of $w$ not in $S$.  Since $G$ is not a fat worm, we have $z\not\in V(o)$.
Let $z_1$ and $z_2$ be the neighbors of $z$ distinct from $w$; since $z\not\in S$,
we may assume that $z_2\not\in V(o)$.  Let $f$ be the face of $G$ incident with $w$, $z$ and $z_2$,
and let $x$ be the neighbor of $z_2$ in $f$ distinct from $z$.  
Note that $f$ is incident with a neighbor of $w$ that belongs to $S$, and thus $f$ shares an edge with $o$.
Hence $f$ is a $6$-face and we have $x\in V(o)$.
If $z_1\in V(o)$, then the $3$-chord $z_1zz_2x$ contradicts Lemma~\ref{lemma-nochord}.
Otherwise, by a symmetric argument we conclude that a face $f'$ incident with $w$, $z$ and $z_1$
is also a $6$-face sharing an edge with $o$, see Figure~\ref{fig:o}(b).  However, $f\cup f'$ forms a simplifying
cut (a pair of $4$-chords) in $G$, which is a contradiction. Therefore, each vertex of $o'$ has at most one neighbor in $S$.

By Lemma~\ref{lemma-nochord}, no vertex of $o'$ has a neighbor both in $S$ and in $o$,
since at least one of the two resulting $3$-chords would not split off a face.
If $v$ is a vertex of $o'$ that has a neighbor in $o$ or $S$, then $v$ has two neighbors in $o'$,
and thus $o'$ has at least three vertices.

Suppose that $o'$ contains a bridge $e=uv$.  Note that both faces $f_1$ and $f_2$ of $G$ incident with $e$ share an edge with $o$.
As $u,v\not\in S$, these two vertices do not lie on $2$-chords.
Note that $f_1\cup f_2$ contains an $\ell_u$-chord $P_u$ of $o$ such that $u\in V(P_u)$ and $v\not\in V(P_u)$, where $3\le\ell_u\le 5$.
Similarly, let $P_v$ be an $\ell_v$-chord of $o$ such that $v\in V(P_v)$ and $u\not\in V(P_v)$.
As neither $P_u$ nor $P_v$ splits off a face, Lemma~\ref{lemma-nochord} implies that $\ell_u,\ell_v\ge 4$.
Since $f_1$ and $f_2$ are $6$-faces, we conclude that $P_u$ and $P_v$ are $4$-chords.
Lemma~\ref{lemma-nochord} further implies that $u$ and $v$ are middle vertices of $P_u$ and $P_v$,
thus $f_1$ and $f_2$ is a pair of $4$-chords forming a simplifying cut.  This is a contradiction;
therefore, $o'$ is $2$-edge-connected.  Since $o'\subset G'$, every edge of $o'$ is incident with a face that shares an edge
with $o$.  We conclude that $o'$ is a cycle.

Consider now a $33$-edge $x_1x_2$ in $o$ and let $x_1x_2x_3x_4x_5x_6$ be the incident $6$-face.
Lemma~\ref{lemma-nochord} implies that each of $x_3$ and $x_6$ has only one neighbor in $o$,
as otherwise one of them would belong to a $2$-chord whose endpoint is incident with a $33$-edge $x_1x_2$.
Therefore, $x_3,x_6\not\in S$ and $x_3x_4x_5x_6$ is a part of $o'$,
and $x_4x_5$ is a $33$-edge with respect to $o'$.  It follows that $t(o')\ge t(o)$.  On the other hand, consider a $33$-edge $y_4y_5$ of $o'$, and
let $y_3y_4y_5y_6$ be a part of the boundary of $o'$.  As $y_4$ and $y_5$ are $3$-vertices in $o'$, there exists a $6$-face
$y_1y_2y_3y_4y_5y_6$ in $G$, and $y_1y_2$ is a $33$-edge in $o$.  Hence, we have $t(o')=t(o)$ and by Lemma~\ref{lemma-n22}, $s(o')=s(o)$.

Similarly, consider a part $z_0z_1z_2z_3z_4z_5z_6$ of $o$, where $z_2z_3$ and $z_3z_4$ are $22$-edges.  The common neighbor $z$ of $z_1$ and $z_5$
belongs to $S$, and its neighbor $z'$ distinct from $z_1$ and $z_5$ belongs to $o'$.  As we observed before, both neighbors $z'_1$ and $z'_2$ of
$z'$ distinct from $z$ belong to $o'$.  Furthermore, by Lemma~\ref{lemma-nochord}, the endpoints of the $2$-chord $z_1zz_5$
are incident with no $33$-edges, thus $z_0$ and $z_6$ are $2$-vertices.
It follows that both $z'_1$ and $z'_2$ have a neighbor in $o$, and $z'_1z'$ and $z'_2z'$ are $22$-edges with respect to $o'$.
Hence, we conclude that $s_2(o')\ge s_2(o)$.

In fact, $D(o')$ can be obtained from $D(o)$ in the following way: Add $0$ between each two consecutive letters in $D(o)$.
Since endvertices of a $2$-chord of $o$ are not incident with $33$-edges, if $B0B$ appears in the resulting sequence, then it is as a part
of a subsequence $n_1B0Bn_2$, where $n_1,n_2\ge 3$.  We construct $D(o')$ by 
\begin{itemize}
\item for each $n_1B0Bn_2$ subsequence, decreasing each of $n_1$ and $n_2$ by $3$,
\item for each $B$ not contained in such a subsequence, decreasing each of the neighboring integers by $1$,
\item for each $A$, increasing each of the neighboring integers by $1$, and
\item suppressing any zeros.
\end{itemize}

Note that the increases/decreases are cumulative, e.g., if $D(o)$ contains a subsequence $A3B2B$, then the sequence $D(o')$
contains a subsequence $A3B0B$ (or $A3BB$ after suppressing zeros).
By Lemma~\ref{lemma-n22}, $t(o)-s(o)=p(G)-6$, and the formula for the length of $o'$ follows:
\begin{eqnarray*}
\ell(o')  &=& \ell(o) + 2t(o) - 2(s(o) - 2s_2(o)) - 6s_2(o) \\
        &=& \ell(o) + 2p(G)-12 - 2s_2(o).
\end{eqnarray*}
\end{proof}

Consider a patch $G$ with boundary $o_1$.  A sequence of cycles $o_1$, $o_2$, \ldots, $o_k$ (with $k\ge 2$) is called an {\em uninterrupted peeling}
if for $1\le i<k$, the subpatch of $G$ bounded by $o_i$ is normal and $o_{i+1}=o'_i$.

\begin{lemma}\label{lemma-peeling}
Let $o$ be the boundary of a patch $G$ such that $p(G)\neq 6$.  If $o = o_1$, $o_2$, \ldots, $o_k$ is
an uninterrupted peeling, then the number of vertices of $G$ outside of (and not including) $o_k$ is at least $4k^2/9$.
\end{lemma}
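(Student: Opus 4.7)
The plan is to combine the length recurrence from Lemma~\ref{lemma-layer} with the invariance of the number of pentagons along a normal peeling. Since a normal patch has no $5$-face sharing an edge with its boundary, a peeling step removes only hexagonal faces; consequently $p(G_i) = p := p(G)$ for every $i$. Iterating Lemma~\ref{lemma-layer} yields
\[
\ell(o_{i+1}) = \ell(o_i) + 2p - 12 - 2\,s_2(o_i), \qquad 1 \le i < k.
\]
The set of vertices of $G_i$ not in $G_{i+1}$ is exactly $V(o_i) \cup S_i$, and the proof of Lemma~\ref{lemma-layer} sets up a bijection between $S_i$ and pairs of consecutive $22$-edges of $o_i$ (each $2$-chord through a vertex of $S_i$ splits off a single hexagonal face whose four $o_i$-edges are $23,22,22,23$), giving $|S_i| = s_2(o_i)$. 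Hence the number of vertices outside $o_k$ is
\[
N = \sum_{i=1}^{k-1}\bigl(\ell(o_i) + s_2(o_i)\bigr),
\]
which is what I must bound below by $4k^2/9$. The hypothesis $p\neq 6$ enters through the sign of $2p-12$.

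If $p \le 5$, the recurrence gives $\ell(o_{i+1}) \le \ell(o_i) - 2$, so $\ell$ strictly decreases by at least $2$ at every step. Combined with the fullerene girth bound $\ell(o_k) \ge 5$, this yields $\ell(o_i) \ge 5 + 2(k-i)$ for every $i$. Summing,
\[
N \ge \sum_{i=1}^{k-1} \ell(o_i) \ge (k-1)(k+5),
\]
and $(k-1)(k+5) \ge 4k^2/9$ for every integer $k \ge 2$.

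The case $p \ge 7$ is where I expect the real work. Now $2p - 12 \ge 2$, so $\ell(o_i)$ tends to grow; a constant-length peeling is only prevented by making $s_2(o_i)$ absorb the drift at every step. The main obstacle is converting this qualitative trade-off into a quantitative lower bound on $|A_i| = \ell(o_i) + s_2(o_i)$. The key tools I would invoke are: (a)~Lemma~\ref{lemma-nochord}, which forbids $33$-edges at the endvertices of any $2$-chord, so in a normal patch each pair of consecutive $22$-edges must be flanked by lone $3$-vertices whose \emph{other} $o_i$-neighbours are themselves $2$-vertices---otherwise the corresponding $2$-chord through the $S$-vertex would be a simplifying cut, contradicting indecomposability; and (b)~Lemma~\ref{lemma-n22}, which via $s(o_i) = 6 - p + t(o_i)$ forces $t(o_i) \ge p-6 \ge 1$. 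Propagating these structural restrictions through the description $D(o_i)$ inflates $\ell(o_i)$ linearly in $s_2(o_i)$, producing an inequality of the form $\ell(o_i) \ge \alpha\, s_2(o_i) + \beta$ for explicit constants. Combining this with the recurrence converts the minimization of $N$ into a small linear program in the variables $\ell(o_i), s_2(o_i)$, whose optimum one checks is at least $4k^2/9$; executing this optimization and pinning down the constant $4/9$ is the technical heart of the proof.
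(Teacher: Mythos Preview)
Your treatment of the case $p\le 5$ is correct, but the proposed strategy for $p\ge 7$ has a genuine gap that cannot be closed by the tools you list. A static inequality of the shape $\ell(o_i)\ge \alpha\,s_2(o_i)+\beta$, combined with the recurrence and the monotonicity $s_2(o_i)\le s_2(o_{i+1})$, does \emph{not} force a quadratic lower bound on $N$. Indeed, the assignment $s_2(o_i)=p-6$ and $\ell(o_i)=L:=\max\{5,\alpha(p-6)+\beta\}$ for every $i$ satisfies all of these constraints (the recurrence then gives $\ell(o_{i+1})=\ell(o_i)$), yet yields $N=(k-1)\bigl(L+p-6\bigr)$, which is only linear in $k$. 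Thus the ``small linear program'' you describe has optimum $O(k)$, not $\Omega(k^2)$, and the constant $4/9$ cannot be recovered from it.

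What is missing is a \emph{dynamic} constraint: along a stretch where $\ell$ stays constant (equal to some $r$), the description $D(o_i)$ evolves in a controlled way. The paper tracks a subsequence $n_1BBn_2$ of $D(o_i)$ and shows, via the transformation rules at the end of the proof of Lemma~\ref{lemma-layer}, that the smaller of $n_1,n_2$ drops by at least $2$ at each peeling step while remaining $\ge 3$ in every normal patch; since initially $2n_1\le r-5$, this forces the length $m$ of the constant stretch to satisfy $r\ge 4m+7$, whence $\sum_{i}\ell(o_i)\ge m r\ge 4m^2$ over that stretch. This is the ingredient that produces the quadratic term and ultimately the constant $4/9$. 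The paper then handles all values of $p\neq 6$ uniformly by exploiting the concavity of $i\mapsto\ell(o_i)$ (a consequence of $s_2$ being nondecreasing): it splits the peeling into an increasing part, a constant part, and a decreasing part, lower-bounds each by the appropriate quadratic, and optimises over the three lengths subject to their sum being $k-1$. Your case split on $p$ is unnecessary once this concavity is used.
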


\begin{proof}
By Lemma~\ref{lemma-layer}, we have $s_2(o_1)\le s_2(o_2)\le \ldots \le s_2(o_k)$.  
Moreover, Lemma~\ref{lemma-layer} also implies that the sequence $\ell(o_1),\ldots,\ell(o_k)$ is concave.

Let $a$ be the largest index such that $\ell(o_1) < \ldots < \ell(o_{a})$ 
and let $b$ be the smallest index such that $\ell(o_b) > \ldots > \ell(o_k)$.
Note that if the whole sequence is decreasing then $a = b = 1$ and similarly if the
whole sequence is increasing then $a = b = k$, hence $a\le b$ in all the cases.
We compute a lower bound on the the number of vertices of $G$ outside of $o_k$ as
$$\sum_{i=1}^{k-1}\ell(o_i) = \sum_{i=1}^{a-1}\ell(o_i) + \sum_{i=a}^{b-1}\ell(o_i) + \sum_{i=b}^{k-1}\ell(o_i).$$


First, we deal with the middle term. Let $m = b - a$.
Suppose that $a < b$. In this case, we have $\ell(o_a)=\ell(o_{a+1})=\ldots=\ell(o_b)$; let $r=\ell(o_a)$.
By Lemma~\ref{lemma-layer}, $s_2(o_i)=p(G)-6$ for $a\le i<b$.
Since $p(G)\neq 6$, we conclude that $s_2(o_a)\ge 1$.  It follows that $D(o_a)$ contains a subsequence $n_1BBn_2$, where $n_1,n_2\ge 3$ by Lemma~\ref{lemma-nochord}.
By Lemma~\ref{lemma-n22}, $t(o_a)-s(o_a)=p(G)-6=s_2(o_a)$.  As $s(o_a)\ge 2s_2(o_a)$, we conclude that $t(o_a)\ge 3$, and thus $n_1+n_2+5\le r$.
By symmetry, assume that $2n_1\le r-5$.  As observed in the proof of Lemma~\ref{lemma-layer}, $D(o_{a+1})$ contains a subsequence $n'_1BBn'_2$,
where $n'_1\le n_1-2$ (the equality is achieved if $n_1$ is adjacent to $A$ in $D(o_a)$).  The same observation applies to $o_{a+1}$, \ldots, $o_{b-2}$.
In the normal patch $o_{b-1}$, the integers adjacent to $BB$ are greater or equal to three, thus $n_1\ge 2m+1$ and $r\ge 4m+7$.  
It follows that $\sum_{i=a}^{b-1} \ell(o_i)=mr\ge m(4m+7)$.
In the case that $a=b$, we have $\sum_{i=a}^{b-1} \ell(o_i) = 0 = m(4m+7)$, since $m = 0$.

Now we deal with the other terms of the sum.  
If $a > 1$, then the sequence $\ell(o_1),\ell(o_2),\ldots, \ell(o_{a-1})$ dominates the arithmetic sequence with the first element 
$\ell(o_1)\ge 5$ and step $2$ due to Lemma~\ref{lemma-layer} and the fact that $p(G)-6-s_2(o_i) \geq 1$ for $1 \leq i \leq a-1$. 
Hence $\sum_{i=1}^{a-1} \ell(o_i)\ge \sum_{i=1}^{a-1} (3+2i)=(a-1)(a+3)$.
If $a = 1$ then $\sum_{i=1}^{a-1} \ell(o_i) = 0 = (a-1)(a+3)$.

Similarly, the sequence $\ell(o_{b}),\ell(o_{b+1}), \ldots, \ell(o_{k-1})$
dominates the arithmetic sequence with the last element $\ell(o_{k-1})\ge 7$ and step $-2$, hence $\sum_{i=b}^{k-1} \ell(o_i)\ge \sum_{i=1}^{k-b} (5+2i)=(k-b)(k-b+6)$.

Note that $(a-1)+m+(k-b)=k-1$.  Summing these inequalities, we obtain
\begin{eqnarray*}
\sum_{i=1}^{k-1} \ell(o_i)&\ge&(a-1)(a+3)+m(4m+7)+(k-b)(k-b+6)\\
&\ge&(a-1)^2+4m^2+(k-b)(k-b+2)+1\\
&\ge& 4k^2/9,
\end{eqnarray*}
where the lower bound in the last inequality is achieved for $a-1=4k/9$, $k-b=4k/9-1$ and $m=k/9$.
Since all the cycles $o_1$, \ldots, $o_{k-1}$ are strictly outside of $o_k$, the claim follows.
\end{proof}

\begin{lemma}\label{lemma-dist}
Let $H$ be a fullerene with $n$ vertices and $f$ a $5$-face of $H$.  There exist at least five $5$-faces distinct from $f$ whose distance to $f$ in the dual of $H$
is at most $\sqrt{63n/2}+14$.
\end{lemma}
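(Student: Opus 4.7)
The plan is to peel concentric dual-layers outward from $f$ using Lemma~\ref{lemma-layer} and bound the total peeling depth using Lemma~\ref{lemma-peeling}. Taking $o_1=\partial f$ as the outer face of the patch $G_1=H$, every vertex of $o_1$ has degree $3$ in $H$, so $t(o_1)=5$, $s(o_1)=0$, $s_2(o_1)=0$ and $p(G_1)=11$. Each successful peeling step advances the dual-distance frontier from $f$ by one, so finding five $5$-faces within dual distance $D$ of $f$ reduces to performing peelings whose total depth is at most $D$ and which together encounter five of the other $5$-faces of $H$.

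Peeling continues uninterruptedly as long as the current patch $G_i$ is normal. It is interrupted in exactly one of three ways: (i) a face of $H$ sharing an edge with $o_i$ is a $5$-face $f'$, placing $f'$ at dual distance at most $i$ from $f$; (ii) $G_i$ is decomposable, admitting an $\ell$-chord ($\ell\le 3$) or a pair of $4$-chords as a simplifying cut; or (iii) $G_i$ is a worm or the shell. Case (iii) forces $p(G_i)=0$ and so can only occur once all $11$ other $5$-faces have already been discovered. In case (i) we absorb $f'$ into the boundary (rerouting $o_i$ around $f'$), decreasing $p$ by one, and resume. In case (ii) we apply the cut and continue peeling on the subpatch that still contains the yet-undiscovered $5$-faces. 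Each restart locally costs only a bounded number of dual-distance steps.

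Since $t(o_1)=5$, each $\ell$-chord simplifying cut strictly decreases $t$, which bounds the number of such cuts; the $5$-face absorptions contribute at most $5$ further restarts (we halt once the fifth is recorded); and the $4$-chord-pair simplifications together with final worm/shell and initial/terminal edge effects contribute only a small further number of phases. Altogether, the number of uninterrupted peeling phases is at most $14$. Because the phases peel over pairwise disjoint subregions of $H$, Lemma~\ref{lemma-peeling} gives $\sum_i 4k_i^2/9 \le n$, i.e.\ $\sum_i k_i^2 \le 9n/4$. Cauchy--Schwarz then yields
\[
\sum_i k_i \;\le\; \sqrt{14\cdot 9n/4} \;=\; \sqrt{63n/2},
\]
and adding the total local overhead (a constant at most $14$) places each of the five discovered $5$-faces at dual distance at most $\sqrt{63n/2}+14$ from $f$.

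The main obstacle will be the bookkeeping at every restart: verifying that absorbing a $5$-face $f'$ and applying each type of simplifying cut yields a patch whose $t$, $s$, $s_2$ invariants (and in particular normality after one further peeling step) are compatible with another application of Lemma~\ref{lemma-peeling}, and pinning down the constant $14$ both as a phase bound and as a local-overhead bound. The pair-of-$4$-chords simplifying cut, where $t$ and $s$ are not obviously reduced, is the most delicate configuration to fit into the phase-counting argument.
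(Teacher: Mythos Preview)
Your overall strategy matches the paper's: start from $\partial f$, peel layers via Lemma~\ref{lemma-layer}, interrupt when the patch is non-normal, and combine Lemma~\ref{lemma-peeling} with Cauchy--Schwarz. The genuine gap is in how you handle case~(ii). A simplifying cut may place undiscovered $5$-faces on \emph{both} sides, so ``the subpatch that still contains the yet-undiscovered $5$-faces'' is not well-defined; following a single branch can therefore reach arbitrarily few pentagons before terminating. More seriously, you never address the possibility that a subpatch has exactly six $5$-faces. Lemma~\ref{lemma-peeling} explicitly requires $p(G)\neq 6$: when $p=6$ and $s_2=0$ the layer lengths are constant by Lemma~\ref{lemma-layer}, so an uninterrupted peeling of depth $k$ contributes only $\Theta(k)$ vertices (think of a nanotube), and the $4k^2/9$ bound fails. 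Your linear peeling process gives no mechanism for avoiding such a subpatch.

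The paper resolves both issues simultaneously by organising the process as a rooted \emph{tree} of patches rather than a single path. At a simplifying cut both subpatches become sons, except that any subpatch with $p\in\{0,6\}$ is discarded. Since a cut can produce at most one subpatch with $p=6$, at most six pentagons are ever lost this way, and the tree still reaches at least $11-6=5$ pentagons at vertices of type~(a) or~(d); this is precisely why the lemma promises only five. The distance bound is then proved along the root-to-leaf path in a minimal-depth subtree $T_1$ containing five such vertices: along any such path there are at most four type-(d) steps (the fifth pentagon is the leaf), and since every simplifying cut---including the $4$-chord pair, which the paper treats uniformly in case~(c) as satisfying $t(o_1)+t(o_2)<t(o)$---strictly decreases $t$, one gets $m_c\le 5+m_d\le 9$. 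This is what pins down the $14$ phases and the $13+1$ overhead that you left open.
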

\begin{proof}
We define a rooted tree $T$ with each vertex of $T$ corresponding to a patch $G\subseteq H$ such that $p(G)\neq 0$ and $p(G)\neq 6$.
Furthermore, we assign a weight $d(e)$ to each edge $e$ of $T$.  The root of $T$ is the patch $G_0=H$ whose boundary is the cycle
bounding $f$, i.e., $p(G_0)=11$.  Suppose that a patch $G$ with boundary $o$ is a vertex of $T$.
Let us note that $G$ is neither a worm nor the shell, since $p(G)>0$. The sons of $G$ in the tree are defined as follows:
\begin{itemize}
\item[(a)] If $p(G)\in \{1,7\}$ and $o$ shares an edge with a $5$-face of $G$, then $G$ is a leaf of $T$.
\item[(b)] If $G$ is a normal patch, then $G$ has a single son $G'$, equal to the last element of the maximal uninterrupted peeling starting with $o$.
The weight of the the edge $e$ joining $G$ with $G'$ is equal to the length (number of patches) of the uninterrupted peeling.
Note that $G'$ is not a normal patch.
\item[(c)] If $G$ has a simplifying cut, then let $o_1$ and $o_2$ be the boundaries of the two patches $G_1$ and $G_2$ to that it splits $G$.
Note that $t(o_1)+t(o_2)<t(o)$.  The patch $G_i$ (with $i\in \{1,2\}$) is a son of $G$ if $p(G_i)\neq 0$ and $p(G_i)\neq 6$.
In that case, the edge between $G$ and $G_i$ has weight $1$.  Since $0<p(G)<12$ and $p(G)\neq 6$, $G$ has at least one son.
\item[(d)] Finally, if $G$ is indecomposable, $p(G)\not\in \{1,7\}$ and $o$ shares an edge with a $5$-face $f'$, note that there exists an $\ell$-chord (with $\ell\le 4$)
splitting off $f'$ (otherwise $f'$ would be incident with a chord and a $2$-chord and both of them would witness the decomposability of $G$).
We let the son $G'$ of $G$ with boundary $o'$ be the patch obtained from $G$ by removing edges incident to both $f'$ and $o$ and 
by removing isolated vertices.
We let the edge of $T$ between $G$ and $G'$ have weight $1$.
\end{itemize}
The \emph{type} of $G$ is defined according to the rule ((a) to (d)) in that its sons are described.

Observe that at least five $5$-faces distinct from $f$ share edges with boundaries of the patches
forming the vertices of $T$ of type (a) or (d).  Indeed, either all $5$-faces are reachable in this way,
or there are exactly six potentially unreachable $5$-faces contained in a single patch that is a leaf of $T$,
or split off by a simplifying cut from an internal vertex of $T$.  Let $T_1$ be a subtree of $T$ of smallest
possible depth that contains five vertices of type (a) or (d).  We choose $T_1$ to be minimal, i.e., all leaves
of $T_1$ are of type (a) or (d).

Consider a vertex $G_1$ with a son $G_2$ in $T_1$, and let $o_1$ and $o_2$ be the boundaries of these patches.
If $G_1$ is of type (b), then $p(G_1)=p(G_2)$ and $t(o_1)=t(o_2)$ by Lemma~\ref{lemma-layer}.  If $G_1$ is
of type (c), then $p(G_1)\ge p(G_2)$ and $t(o_1)>t(o_2)$.  If $G_1$ is of type (d), then
$p(G_1)>p(G_2)$ and $t(o_1)\ge t(o_2)-1$.

Let $P=p_1p_2\ldots p_m$ be the path in $T$ joining the root $G_0=p_1$ with a leaf $G=p_m$ whose boundary is incident with a $5$-face $f'$.
Observe that the distance between $f$ and $f'$ in the dual of $G$ is at most the sum of the weights of the edges of $P$, plus $1$.  Let $o_0$ and $o$ be the boundaries
of $G_0$ and $G$, respectively.  Let $m_b$, $m_c$ and $m_d$ be the numbers of vertices of types (b), (c) and (d) in $P$ distinct from $G$, respectively.
By the observations in the previous paragraph, we have $t(o)\le t(o_0)+m_d-m_c=5+m_d-m_c$.  By the choice of $T_1$, we have $m_d\le 4$,
and since $t(o)$ is nonnegative, $m_c\le 9$.  Therefore,
$$\sum_{\mbox{$p_i$ is non-normal}} d(p_ip_{i+1})\le m_c+m_d\le 13.$$
Let $d_1$, $d_2$, \ldots, $d_{m_b}$ be the sequence of the weights of all edges $p_ip_{i+1}$ of $P$ such that $p_i$ is a normal patch;
by the construction of $T$, $p_{i+1}$ is not a normal patch in this case, hence $m_b\le m_c+m_d+1\le 14$.
Using Lemma~\ref{lemma-peeling}, we obtain
$$n\ge \frac{4}{9}\sum_{i=1}^{m_b}d_i^2\ge\frac{4}{9m_b}\left(\sum_{i=1}^{m_b}d_i\right)^2.$$
Therefore, the total weight of these edges is at most $\sqrt{63n/2}$, and the distance between $f$ and $f'$ is at most
$\sqrt{63n/2}+14$.
\end{proof}

\begin{lemma}\label{lemma-match}
Every graph $G$ on $12$ vertices with minimum degree $5$ such that $K_{5,7}\not\subseteq G$ has a perfect matching.
\end{lemma}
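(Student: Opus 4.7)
The plan is to apply Tutte's theorem, which guarantees a perfect matching in $G$ unless there is some $S \subseteq V(G)$ with $q(G-S) > |S|$, where $q$ denotes the number of odd components. Assume no perfect matching exists; since $|V(G)| = 12$ is even, the parity of $q(G-S)-|S|$ forces $q(G-S) \geq |S| + 2$ for some such $S$. I write $s = |S|$ and $q = q(G-S)$, and from here the goal is to extract enough structure from the minimum-degree hypothesis to pin down the offending $S$ essentially uniquely.

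The key estimate comes from the degree condition: every $v \in V(G) \setminus S$ has at most $s$ neighbors in $S$, so $\deg_{G-S}(v) \geq 5 - s$, and hence every component of $G-S$ has at least $\max(1, 6-s)$ vertices. Rounding up to the next odd integer yields a lower bound $m(s)$ on the size of any odd component, explicitly $m(0) = 7$, $m(1) = m(2) = 5$, $m(3) = m(4) = 3$, and $m(s) = 1$ for $s \geq 5$. Combining $q \geq s+2$ with the volume bound $q \cdot m(s) \leq 12 - s$ produces the single inequality $(s+2)\,m(s) \leq 12 - s$.

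Next I run this inequality through the short table $s = 0, 1, \ldots, 11$. For $s \geq 6$ we already have $s+2 > 12 - s$, which is impossible; for $s \leq 4$ a direct check shows $(s+2)\,m(s) > 12 - s$; only $s = 5$ survives, and it forces equality throughout. Consequently $G - S$ must consist of exactly seven isolated vertices, and each such vertex has all five of its neighbors inside the size-five set $S$. This produces a complete bipartite subgraph $K_{5,7}$ between $S$ and $V(G) \setminus S$, contradicting the hypothesis $K_{5,7} \not\subseteq G$.

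The argument is essentially mechanical once Tutte's theorem is invoked; the only mild obstacle is keeping the parity rounding of odd-component sizes straight so that the case $s = 5$ is identified as the unique tight case, and then observing that tightness there is exactly what forces the forbidden $K_{5,7}$.
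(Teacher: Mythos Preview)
Your proof is correct and follows essentially the same route as the paper's: invoke Tutte's theorem, use the minimum-degree condition to bound odd-component sizes, eliminate every value of $|S|$ except $5$, and observe that $|S|=5$ forces seven isolated vertices and hence a copy of $K_{5,7}$. Your use of the parity $q(G-S)\equiv |S|\pmod 2$ together with the tabulated bound $m(s)$ streamlines the case analysis a bit compared to the paper's more ad hoc treatment of $|S|<5$, but the underlying argument is the same.
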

\begin{proof}
If $G$ does not have a perfect matching, then there exists a set $S\subseteq V(G)$ such that
$G-S$ has more than $|S|$ components of odd size.  Consider such a set $S$, and observe that $|S|<6$.
As $\delta(G)\ge 5$, $G$ is either $2K_6$ (and thus has a perfect matching) or $G$ is connected.  Therefore, $|S|\ge 1$.

If $|S|<5$, then since $\delta(G)\ge 5$, no component of $G-S$ may consist of a single vertex, and hence $G-S$ has at most three odd components
and $|S|\le 2$.  Since $\delta(G)\ge 5$, each component of $G-S$ has size at least $4$.  However, $G-S$ must have at least two components of
odd size, thus it would have exactly two components of size $5$.  However, then $|S|=2$, which is not smaller than the number of odd components.

Therefore, $|S|=5$ and $G-S$ has at least $6$ components of odd size.  However, this is only possible if each component of
$G-S$ consists of a single vertex, and hence $K_{5,7}\subseteq G$.
\end{proof}

\begin{proof}[Proof of Theorem~\ref{thm-main}]
Let $K'_H$ be the subgraph of $K_H$ consisting of edges with weight at most $\sqrt{63n/2}+14$.  By Lemma~\ref{lemma-dist},
$\delta(K'_H)\ge 5$, and thus $K'_H$ either has a perfect matching or $K_{5,7}$ as a subgraph, by Lemma~\ref{lemma-match}.
In the former case, the weight of each perfect matching in $K'_H$ (and thus of the minimum-weight perfect matching in $K_H$)
is at most $6(\sqrt{63n/2}+14) = \sqrt{1134n}+84$.   In the latter case, note that the weights in $K_H$ satisfy the triangle inequality, thus
the weight of any edge in $K_H$ is at most $2(\sqrt{63n/2}+14)$, and we conclude that $K_H$ has a perfect matching
of weight at most $(5+2)(\sqrt{63n/2}+14) = \sqrt{3087n/2}+98$.  By Theorem~\ref{thm-bibmatch}, $b(H)=O(\sqrt{n})$.
\end{proof}

The multiplicative constant $\sqrt{3087/2}\approx 39.29$ is likely to be far from the best possible.  Indeed, it can be somewhat
improved by a more complicated analysis of our argument (e.g., observing that not all $5$-faces can appear in $T$ on the lowest
possible level, indicating that some of the edges of $K_H$ are much shorter than we estimated).  Nevertheless, we could not
improve it enough to approach the best known lower bound of $\sqrt{12/5}\approx 1.549$ of Do\v{s}li\'c and Vuki\v{c}evi\'c~\cite{DV}.

\end{document}